\documentclass[preprint]{elsarticle}

\usepackage{lineno,hyperref}
\usepackage{amsmath,amsthm,amssymb,amsfonts}
\usepackage[all]{xy}
\newdefinition{defn}{Definition}[section]
\newdefinition{nim}[defn]{}
\newdefinition{rem}[defn]{Remark}
\newdefinition{ex}[defn]{Example}
\newtheorem{thm}{Theorem}[section]

\newtheorem{prop}[thm]{Proposition}
\newcommand{\HH}{\mathrm{HH}}
\newcommand{\Hc}{\mathrm{H}}
\newcommand{\res}{\mathrm{res}}
\newcommand{\Hom}{\mathrm{Hom}}
\newcommand{\Ob}{\mathrm{Ob}}
\newcommand{\Inj}{\mathrm{Inj}}
\newcommand{\Mor}{\mathrm{Mor}}
\newcommand{\Ima}{\mathrm{Im}}
\newcommand{\Aut}{\mathrm{Aut}}
\newcommand{\Ker}{\mathrm{Ker}}
\newcommand{\Br}{\mathrm{Br}}
\newcommand{\Id}{\mathrm{Id}}
\newcommand{\Ext}{\mathrm{Ext}}
\modulolinenumbers[5]



\bibliographystyle{model1-num-names}









\begin{document}

\begin{frontmatter}

\title{A theorem of Mislin for cohomology of fusion systems and applications to block algebras of finite groups}

\author{Constantin-Cosmin Todea}\corref{mycorrespondingauthor}
\address{Department of Mathematics,
         Technical University of Cluj-Napoca,
         Str. G. Baritiu 25,
         Cluj-Napoca 400027,
         Romania}



\cortext[mycorrespondingauthor]{Corresponding author}
\ead{constantin.todea@math.utcluj.ro}


\begin{abstract}
The aim of this short expository article is to give an algebraic proof for a theorem of Mislin in the case of cohomology of saturated fusion systems defined on $p$-groups when $p$ is odd. Some applications of this theorem, which includes different proofs of known results regarding block algebras of finite groups, are also given.
\end{abstract}

\begin{keyword}
fusion system \sep cohomology \sep variety \sep block algebra
\MSC[2010]20C20\sep 16EXX
\end{keyword}

\end{frontmatter}


\section{Introduction}\label{sec1}

A saturated fusion system $\mathcal{F}$ on a finite $p$-group $P$ is a category whose objects are the subgroups of $P$ and whose morphisms satisfy certain axioms mimicking the behavior of a finite group $G$ having $P$ as a Sylow subgroup. For the convenience of the reader we give, in Section \ref{sec1'}, the definition of saturated fusion systems and some basic facts, following \cite{AsKeOl}.  Let $k$ be an algebraically closed field of characteristic $p$. We denote by $\Hc^*(G,k)$ the cohomology algebra of a group $G$ with trivial coefficients. We denote by $\Hc^*(\mathcal{F})$ the subalgebra of $\mathcal{F}$-stable elements in $\Hc^*(P,k)$, i.e. the cohomology algebra of the saturated fusion system $\mathcal{F}$, which is the subalgebra of $\Hc^*(P,k)$ consisting of elements $\zeta\in \Hc^*(P,k)$ such that
$$\res^P_Q(\zeta)=\res_{\varphi}(\zeta),$$
for any $\varphi\in\Hom_{\mathcal{F}}(Q,P)$ and any subgroup $Q$ of $P$.

A celebrated theorem of Mislin in \cite{Mis} regarding the control of fusion in group cohomology (stated for compact Lie groups) has now a new short algebraic proof for $p$ odd thanks to Benson, Grodal and Henke \cite{BenGroHen}. See \cite{Hid}, \cite{Oku} for other algebraic proofs which uses Mackey functors and cohomology of trivial source modules; see also  \cite{Sym} for a different algebraic approach. Also, in \cite[Remark 5.8]{LiTr} Linckelmann suggests a topological proof for Mislin's theorem in the case of block algebras of finite groups, more precisely for cohomology of fusion systems associated to blocks. We prove this theorem of Mislin in the general context of cohomology of saturated fusion systems for $p$ odd, by extending the proof of Benson, Grodal and Henke to saturated fusion systems.

The $k$-algebra $\Hc^*(\mathcal{F})$ is a graded-commutative and finitely generated, hence we associate the spectrum of maximal ideals, i.e. the algebraic variety which we denote by $V_{\mathcal{F}}$. Let $\mathcal{G}$ be a saturated fusion subsystem of $\mathcal{F}$ defined on the same finite $p$-group $P$. We have an inclusion map
$$i:\Hc^*(\mathcal{F})\rightarrow\Hc^*(\mathcal{G}),$$ which induces a map on varieties
$$i^*:V_{\mathcal{G}}\rightarrow V_{\mathcal{F}}.$$
The main result of this paper is the following theorem which contains Mislin's theorem for saturated fusion systems as a special case, when $p$ is odd.
\begin{thm}\label{thmMislin}Let $\mathcal{G}$ be a saturated fusion subsystem of $\mathcal{F}$ defined on the same finite $p$-group $P$ and $p$ an odd prime. If for each $\zeta\in \Hc^*(\mathcal{G})$ we have $\zeta^{p^r}\in\Ima(i)$ for some $r\geq 0$, then $\mathcal{G}=\mathcal{F}$. In particular we have  $\Hc^*(\mathcal{F})=\Hc^*(\mathcal{G})$ if and only if $\mathcal{G}=\mathcal{F}$.
\end{thm}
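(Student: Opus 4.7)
The plan is to follow the algebraic proof of Benson--Grodal--Henke for the original Mislin theorem (control of $p$-fusion), adapting it from finite groups to saturated fusion systems using the framework recalled in Section~\ref{sec1'}. The hypothesis that $\zeta^{p^r}\in\Ima(i)$ for every $\zeta\in\Hc^*(\mathcal{G})$ is precisely the condition that $i$ is an $F$-isomorphism of graded-commutative $k$-algebras; consequently the induced morphism of spectra $i^*\colon V_{\mathcal{G}}\to V_{\mathcal{F}}$ is a universal homeomorphism. Combined with a Quillen-type stratification for cohomology of saturated fusion systems, this homeomorphism already forces the $\mathcal{F}$- and $\mathcal{G}$-conjugacy classes of elementary abelian subgroups of $P$ to coincide, and the automorphism groups $\Aut_{\mathcal{F}}(E)$ and $\Aut_{\mathcal{G}}(E)$ to agree on every elementary abelian $E\leq P$.

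By Alperin's fusion theorem for saturated fusion systems, to conclude $\mathcal{F}=\mathcal{G}$ it then suffices to show that $\Aut_{\mathcal{F}}(Q)=\Aut_{\mathcal{G}}(Q)$ whenever $Q$ is $\mathcal{F}$-essential or $Q=P$. Assume for contradiction that strict inclusion $\Aut_{\mathcal{G}}(Q)\subsetneq \Aut_{\mathcal{F}}(Q)$ holds for some such $Q$ and pick $\varphi$ in the difference. Using the elementary-abelian information from the previous step together with invariant theory for the action of $\Aut_{\mathcal{G}}(Q)$ on $\Hc^*(Q,k)$, one produces a class $\zeta_Q\in\Hc^*(Q,k)$ that is $\Aut_{\mathcal{G}}(Q)$-invariant but is moved by $\varphi$. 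A stable-element/transfer construction along $Q\leq P$ then yields a class $\zeta\in\Hc^*(\mathcal{G})$ such that no $p^r$-th power of $\zeta$ can lie in $\Hc^*(\mathcal{F})$, contradicting the assumption on $i$. The equality case $\Hc^*(\mathcal{F})=\Hc^*(\mathcal{G})$ of the theorem falls out as the special case $r=0$.

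The step I expect to be the main obstacle is the passage from local to global in this last construction: fabricating a class in $\Hc^*(\mathcal{G})$ whose non-$\mathcal{F}$-stability survives under \emph{all} $p^r$-th powers is precisely where the assumption that $p$ is odd enters. For $p$ odd the graded-commutative cohomology $\Hc^*(\mathcal{G})$ behaves like an honest commutative ring modulo nilpotents, and the Frobenius map $\zeta\mapsto\zeta^{p^r}$ can be inverted up to nilpotents; this is the feature that allows the non-vanishing $F$-obstructions of Benson--Grodal--Henke to be transported into $\Hc^*(\mathcal{G})\setminus\Hc^*(\mathcal{F})$. A secondary technical difficulty is that morphisms in $\mathcal{F}$ are abstract rather than induced by conjugation in an overgroup, so every appeal to ordinary restriction, transfer, or Alperin fusion in the Benson--Grodal--Henke argument has to be replaced by its fusion-system counterpart, which is the main point where working in Section~\ref{sec1'}'s framework becomes indispensable.
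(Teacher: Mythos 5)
Your first paragraph is exactly the paper's argument: the hypothesis says $i$ is an $F$-isomorphism (injectivity of $i$ makes the nilpotent-kernel condition automatic), so $i^*$ is a homeomorphism of varieties, and Theorem~\ref{thmbij} (proved via Quillen stratification, Theorem~\ref{thmA1}) then gives that $\mathcal{G}$ controls $p$-fusion in $\mathcal{F}$ on elementary abelian subgroups. Up to that point you and the paper agree.

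The second paragraph, however, contains a genuine gap, and in fact the strategy there cannot be repaired. Having reduced to the situation where $\mathcal{F}$ and $\mathcal{G}$ agree on elementary abelian subgroups (conjugacy classes and automorphism groups), you propose to finish by contradiction: take an essential $Q$ with $\Aut_{\mathcal{G}}(Q)\subsetneq\Aut_{\mathcal{F}}(Q)$ and manufacture a class $\zeta\in\Hc^*(\mathcal{G})$ none of whose $p^r$-th powers lies in $\Hc^*(\mathcal{F})$. But Quillen stratification says precisely that the $F$-isomorphism type of $\Hc^*(\mathcal{G})$ is determined by the elementary abelian subgroups together with their $\mathcal{G}$-fusion; once that data coincides with the $\mathcal{F}$-data, the inclusion $\Hc^*(\mathcal{F})\hookrightarrow\Hc^*(\mathcal{G})$ is \emph{automatically} an $F$-isomorphism, so the obstruction class you want does not exist. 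Mod-$p$ cohomology is blind to fusion on non-elementary-abelian subgroups beyond what the elementary abelian stratum records; no transfer or stable-element construction will detect the discrepancy at $Q$. The step from ``control on elementary abelians'' to ``$\mathcal{G}=\mathcal{F}$'' is not cohomological at all: it is the purely fusion-theoretic Theorem~B of Benson--Grodal--Henke, which the paper simply cites. That is also where the hypothesis $p$ odd genuinely enters --- via the group-theoretic fact that a $p'$-automorphism of a $p$-group ($p$ odd) is detected on elements of order $p$, which fails for $p=2$ (quaternion groups) --- and not, as you suggest, through any Frobenius-inversion property of graded-commutative rings; the variety machinery works equally well at $p=2$. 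Your closing remark that the equality case is the instance $r=0$ is correct.
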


The ingredients for the proof of the above theorem were already mentioned by Benson, Grodal and Henke in \cite[Remark 3.7]{BenGroHen}. We follow their suggestion and we fill this gap in the literature. There are two ingredients for proving Theorem \ref{thmMislin}. To explain those ingredients, we first introduce the following terminology. Let $\mathcal{G}$ be a saturated fusion subsystem of $\mathcal{F}$ defined on the same finite $p$-group $P$. For shortness, we will say that $\mathcal{G}$ \emph{controls $p$-fusion in $\mathcal{F}$ on elementary abelian $p$-subgroups} if $\Hom_{\mathcal{G}}(E_1,E_2)=\Hom_{\mathcal{F}}(E_1,E_2)$ for all $E_1,E_2\leq P$, where $E_1,E_2$ runs over the set of elementary abelian $p$-subgroups of $P$. Now one of the ingredients is a property of saturated
fusion systems (\cite[Theorem B]{BenGroHen}) which says that if $p$ is odd and $\mathcal{G}$ controls $p$-fusion in $\mathcal{F}$ on elementary abelian $p$-subgroups then $\mathcal{G}=\mathcal{F}$. Another ingredient is the following theorem which says that control of $p$-fusion on elementary abelian subgroups happens if and only if $i^*$ is a bijective map. \cite[Theorem 2]{AlpMis} and \cite[Proposition 10.9]{QUill2} are similar statements for group cohomology.

\begin{thm}\label{thmbij}Let $\mathcal{G}$ be a saturated fusion subsystem of $\mathcal{F}$ defined on the same finite $p$-group $P$. Then $i^*$ is surjective. Moreover we have that $i^*$ is an injective map if and only if $\mathcal{G}$ controls $p$-fusion in $\mathcal{F}$ on elementary abelian $p$-subgroups.
\end{thm}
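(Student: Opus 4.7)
My plan is to adapt the classical arguments of Alperin--Mislin \cite{AlpMis} and Quillen \cite{QUill2} for group cohomology to saturated fusion systems, the key technical input being a Quillen-type stratification of $V_{\mathcal{F}}$ in terms of the elementary abelian $p$-subgroups of $P$.

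For the surjectivity of $i^*$, I rely on the Evens--Venkov-type finiteness for fusion systems: the ring $\Hc^*(P,k)$ is a finitely generated module over the stable-element subalgebra $\Hc^*(\mathcal{F})$. From the chain $\Hc^*(\mathcal{F}) \subseteq \Hc^*(\mathcal{G}) \subseteq \Hc^*(P,k)$ I deduce that $\Hc^*(\mathcal{G})$ is itself a finitely generated, and in particular integral, $\Hc^*(\mathcal{F})$-module. Lying-over for integral extensions of commutative rings then yields that $i^*$ is surjective on maximal spectra.

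For the characterisation of injectivity, I invoke Quillen stratification in the fusion-system setting, which identifies (up to $F$-isomorphism)
$$V_{\mathcal{F}} \;\cong\; \varinjlim_{E \in \mathcal{A}_{\mathcal{F}}} V_E, \qquad V_{\mathcal{G}} \;\cong\; \varinjlim_{E \in \mathcal{A}_{\mathcal{G}}} V_E,$$
where $\mathcal{A}_{\mathcal{F}}$ (respectively $\mathcal{A}_{\mathcal{G}}$) denotes the full subcategory of $\mathcal{F}$ (respectively $\mathcal{G}$) on the elementary abelian $p$-subgroups of $P$. If $\mathcal{G}$ controls $p$-fusion in $\mathcal{F}$ on elementary abelian subgroups, then $\mathcal{A}_{\mathcal{F}} = \mathcal{A}_{\mathcal{G}}$ as categories, the two colimits agree, and $i^*$ is bijective.

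Conversely, assume there exists $\varphi \in \Hom_{\mathcal{F}}(E_1, E_2) \setminus \Hom_{\mathcal{G}}(E_1, E_2)$ with $E_1, E_2 \leq P$ elementary abelian; replacing $E_2$ by $\varphi(E_1)$ (and using that inclusions lie in $\mathcal{G}$) I may take $\varphi$ to be an isomorphism. I then pick a closed point $u \in V_{E_1}$ lying outside the finite union of the fixed-point subspaces of the non-trivial elements of $\Aut(E_1)$ and of the subvarieties $V_{E'} \subseteq V_{E_1}$ for elementary abelian $E' < E_1$; such a $u$ exists because, $p$ being odd and $k$ algebraically closed, $V_{E_1} \cong \mathbb{A}^{\mathrm{rank}(E_1)}(k)$ is not a finite union of proper subvarieties. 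Writing $\iota_{E_i}\colon V_{E_i} \to V_P$ for the maps induced by inclusion and $\varphi_{\#}\colon V_{E_1} \to V_{E_2}$ for the isomorphism induced by $\varphi$, the points $\iota_{E_1}(u)$ and $\iota_{E_2}(\varphi_{\#}(u))$ coincide in $V_{\mathcal{F}}$ by $\mathcal{F}$-stability, but, by the genericity of $u$, differ in $V_{\mathcal{G}}$: any identifying chain of $\mathcal{G}$-morphisms would, after Quillen reduction, produce a $\mathcal{G}$-isomorphism $\psi\colon E_1 \to E_2$ together with $\beta \in \Aut_{\mathcal{G}}(E_1)$ such that the non-trivial automorphism $\beta^{-1}\psi^{-1}\varphi$ of $E_1$ fixes $u$, contradicting the choice of $u$. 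This produces two distinct points of $V_{\mathcal{G}}$ with the same image in $V_{\mathcal{F}}$, contradicting the injectivity of $i^*$. The main obstacle is to establish the Quillen colimit description of $V_{\mathcal{F}}$ rigorously in the fusion-system context and to check that indirect $\mathcal{G}$-identification chains through intermediate elementary abelian subgroups cannot, for a generic $u$, produce alternative identifications outside the single-isomorphism analysis above.
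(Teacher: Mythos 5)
Your proposal is correct in substance and rests on the same two pillars as the paper's proof: integrality of $\Hc^*(\mathcal{G})$ over $\Hc^*(\mathcal{F})$ for surjectivity, and Quillen stratification for saturated fusion systems for the injectivity criterion. The surjectivity argument and the direction ``control of fusion $\Rightarrow$ $i^*$ injective'' are essentially identical to the paper's (your colimit formulation of the stratification is equivalent, via $r_{\mathcal{G},E}\circ i=r_{\mathcal{F},E}$, to the paper's explicit use of the decomposition into the pieces $V^+_{\mathcal{F},E}$ and the inseparable isogenies $V^+_E/W_{\mathcal{F}}(E)\rightarrow V^+_{\mathcal{F},E}$). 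Where you genuinely diverge is the converse: the paper argues directly from injectivity of $i^*$, extracting in three steps that $\mathcal{F}$-isomorphic elementary abelians are $\mathcal{G}$-isomorphic, that $\Aut_{\mathcal{F}}(E)=\Aut_{\mathcal{G}}(E)$ (using the free action of Remark \ref{remfree} to compare orbit counts), and then factoring a general morphism as an isomorphism followed by an inclusion; you instead prove the contrapositive by exhibiting, from a single $\varphi\in\Hom_{\mathcal{F}}(E_1,E_2)\setminus\Hom_{\mathcal{G}}(E_1,E_2)$, two points of $V_{\mathcal{G}}$ with the same image in $V_{\mathcal{F}}$. Your generic-point argument is sound: a point $u\in V^+_{E_1}$ with trivial $\Aut(E_1)$-stabilizer exists (this is the same linear-algebra fact underlying Remark \ref{remfree}, and does not actually require $p$ odd), and it cleanly handles all three failure modes at once. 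The trade-off is that the paper's version yields the control statement constructively, which is what is then fed into the Benson--Grodal--Henke theorem, whereas yours is shorter but purely by contradiction.

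The ``main obstacle'' you flag is not a real gap given the setup of the paper: the colimit description and, more importantly, the claim that indirect identification chains in $V_{\mathcal{G}}$ between points of $V^+_{E_1}$ and $V^+_{E_2}$ can only arise from a $\mathcal{G}$-isomorphism $E_1\rightarrow E_2$ composed with the $W_{\mathcal{G}}(E_2)$-action, is exactly the content of Theorem \ref{thmA1} (parts (i) and (ii)), quoted from Linckelmann's work and taken as a black box here. Since an inseparable isogeny is bijective on closed points over the algebraically closed field $k$, the identification of $r^*_{\mathcal{G},E_2}(\psi_{\#}(u))$ with $r^*_{\mathcal{G},E_2}(\varphi_{\#}(u))$ forces these points into a single $W_{\mathcal{G}}(E_2)$-orbit, which is the only step your sketch leaves implicit; citing Theorem \ref{thmA1} closes it.
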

To prove Theorem \ref{thmbij} we will use Quillen stratification for cohomology of saturated fusion systems given by Markus Linckelmann in \cite{LiQuillfus}, for which the proof is the same as for block algebras \cite{LiQuill}, with some minor adjustments. Since it has not appeared in this form in the literature we state it here for completeness. For any subgroup $Q$ of $P$ denote by $V_Q$ the maximal ideal spectrum of $\Hc^*(Q,k)$, and set $V^+_Q = V_Q \setminus\bigcup_{R<Q} (\res^Q_R)^*(V_R)$. Denote by $V_{\mathcal{F},Q}$ and $V^+_{\mathcal{F},Q} $ the images of $V_Q$ and $V^+_Q$ in $V_{\mathcal{F}}$ under the map $r^*_{\mathcal{F},Q}$ induced by the algebra homomorphism $r_{\mathcal{F},Q}:\Hc^*(\mathcal{F})\rightarrow \Hc^*(Q, k)$ given by composing the inclusion $\Hc^*(\mathcal{F})\subseteq \Hc^*(P,k)$
with the restriction $\res^P_Q:\Hc^*(P,k)\rightarrow \Hc^*(Q,k).$
\begin{thm}\label{thmA1}(\cite[Theorem 1]{LiQuillfus}) With the notation above, the following hold.
\begin{itemize}\item[(i)] The variety $V_{\mathcal{F}}$ is the disjoint union of the locally closed subvarieties $V^+_{\mathcal{F},E}$, where $E$ runs over a set of representatives of the $\mathcal{F}$-isomorphism classes of elementary abelian subgroups of $P$.
\item[(ii)] Let $E$ be an elementary abelian subgroup of $P$. The group $W_{\mathcal{F}}(E)=\Aut_{\mathcal{F}}(E)$ acts on $V^+_E$ and the restriction map $\res^P_E$ induces an inseparable isogeny $$V^+_E /W_{\mathcal{F}}(E)\rightarrow V^+_{\mathcal{F},E}.$$
\end{itemize}
\end{thm}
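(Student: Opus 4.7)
The plan is to transpose Linckelmann's proof from \cite{LiQuill} in the block case to an arbitrary saturated fusion system, using $\mathcal{F}$-stability in place of the local structure coming from Brauer pairs. The common starting observation is that $\Hc^*(\mathcal{F})\subseteq\Hc^*(P,k)$ is a finite integral extension, so that the induced morphism $r^*_{\mathcal{F},P}\colon V_P\to V_{\mathcal{F}}$ is surjective. This reduces both parts of the theorem to statements about the restriction maps from the ordinary group cohomology $\Hc^*(P,k)$.

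For part (i), I combine this surjectivity with the classical Quillen stratification $V_P=\bigsqcup_{[E]_P}V^+_{P,E}$, where $E$ runs over $P$-conjugacy class representatives of elementary abelian subgroups. Pushing forward under $r^*_{\mathcal{F},P}$ yields the covering $V_{\mathcal{F}}=\bigcup_E V^+_{\mathcal{F},E}$. Any $\varphi\in\Hom_{\mathcal{F}}(E_1,E_2)$ intertwines $\res^P_{E_1}$ and $\res^P_{E_2}$ on $\mathcal{F}$-stable classes, so $V^+_{\mathcal{F},E_1}=V^+_{\mathcal{F},E_2}$ whenever $E_1$ and $E_2$ are $\mathcal{F}$-isomorphic, and the union reduces to one indexed by $\mathcal{F}$-isomorphism class representatives. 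The central step is the disjointness: if $\mathfrak{p}\in V^+_{\mathcal{F},E_1}\cap V^+_{\mathcal{F},E_2}$, one chooses preimages in $V_P$ and uses that the Quillen stratification upstairs is already a disjoint decomposition to produce $P$-conjugates $E_i'\in [E_i]_P$ supporting a common lift; saturation, specifically the extension axiom applied to fully normalised representatives, then promotes this to an $\mathcal{F}$-isomorphism between $E_1$ and $E_2$.

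For part (ii), $W_{\mathcal{F}}(E)=\Aut_{\mathcal{F}}(E)$ acts on $V_E$, preserves $V^+_E$, and $\mathcal{F}$-stability makes $r^*_{\mathcal{F},E}$ factor through the quotient $V^+_E/W_{\mathcal{F}}(E)$. That the resulting morphism $V^+_E/W_{\mathcal{F}}(E)\to V^+_{\mathcal{F},E}$ is an inseparable isogeny is the same Frobenius computation as in the block case: on the polynomial--exterior algebra $\Hc^*(E,k)$, any $W_{\mathcal{F}}(E)$-invariant class can be promoted to an $\mathcal{F}$-stable one after passing to a sufficiently high $p^r$-th power, so $\Ima(r_{\mathcal{F},E})$ and $\Hc^*(E,k)^{W_{\mathcal{F}}(E)}$ coincide after inverting nilpotents and taking Frobenius powers, which is precisely the condition defining an inseparable isogeny.

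The main obstacle is the disjointness in (i): this is the only place where one has to do real fusion-theoretic work rather than a formal translation. In \cite{LiQuill} Linckelmann exploits the transitive $G$-action on maximal $b$-Brauer pairs contained in a given one to upgrade $P$-conjugacy to the correct local equivalence; in the abstract fusion setting the analogous transitivity must be extracted from the saturation axioms, and this is exactly what the phrase ``minor adjustments'' in the statement refers to.
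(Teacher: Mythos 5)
The paper itself offers no proof of Theorem \ref{thmA1}: it cites Linckelmann's preprint and records only that the argument is obtained by transposing \cite{LiQuill}, ``with only additional ingredient the fusion stable bisets whose existence was proved by Broto, Levi, and Oliver in \cite{BLO}''. Your outline follows the same transposition strategy, but it omits precisely that additional ingredient, and this is a substantive gap rather than a stylistic difference. In the block case every global input to Quillen's argument is carried by the source algebra $ikGi$ viewed as a $kP$--$kP$-bimodule and the transfer maps it induces; in the abstract setting the replacement is the $\mathcal{F}$-characteristic biset $\Omega$ of \cite{BLO} and the associated transfer on $\Hc^*(P,k)$, normalised using that $|\Omega|/|P|$ is prime to $p$. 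Without it, the load-bearing assertions of your sketch are unsupported: (a) that $\Hc^*(\mathcal{F})\subseteq\Hc^*(P,k)$ is a finite integral extension, hence that $r^*_{\mathcal{F},P}$ is surjective and that $\Hc^*(\mathcal{F})$ is even Noetherian --- this is not automatic for a subalgebra of a Noetherian ring and is exactly what the biset transfer (exhibiting $\Hc^*(\mathcal{F})$ as a direct summand of $\Hc^*(P,k)$ as a module over itself) delivers; (b) the heart of part (ii), namely that every $\Aut_{\mathcal{F}}(E)$-invariant class on $E$ has a $p^r$-th power in $\Ima(r_{\mathcal{F},E})$ --- calling this ``the same Frobenius computation as in the block case'' begs the question, since in \cite{LiQuill} that computation is performed with the bimodule transfer; and (c) the $F$-isomorphism onto the inverse limit of $\Hc^*(E,k)$ that underlies the disjointness in (i).

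On the disjointness you are in any case candid that the needed transitivity is left unproved, but I would add that the mechanism you propose (the receptivity/extension axiom applied to fully normalised representatives) is not obviously the right one. What must be shown is that a point of $V^+_{\mathcal{F},E_1}\cap V^+_{\mathcal{F},E_2}$ forces $E_1$ and $E_2$ to be $\mathcal{F}$-conjugate; in Quillen's and Linckelmann's treatments this is detected cohomologically, from the stable-element description of $\Hc^*(\mathcal{F})$ and the Quillen-type $F$-isomorphism just mentioned, not by a direct appeal to the saturation axioms. In short, your skeleton is the intended one, but at each point where \cite{LiQuill} uses block-theoretic input you defer to ``the same argument'' without supplying the fusion-system substitute, which is the entire content of the ``minor adjustments''.
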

We will not give the proof of Theorem \ref{thmA1} from \cite{LiQuillfus}, since the way to obtain it is to copy the proof from \cite{LiQuill} in the context of saturated fusion systems. This proof follows in turn very closely Benson's presentation in  \cite{BenII} of parts of
Quillen's original work in \cite{QUill2}, with only additional ingredient the fusion stable bisets whose
existence was proved by Broto, Levi, and Oliver in \cite{BLO}.
\begin{rem}\label{remfree} From \cite{Par} we know that any saturated fusion system $\mathcal{F}$ can be identified with $\mathcal{F}_P(G)$ for some finite group $G$ containing $P$ as a subgroup, where in the last category the morphisms are given by conjugation with elements from $G$. It follows that for any elementary abelian $p$-subgroup $E$ of $P$ we can identify $\Aut_{\mathcal{F}}(E)$ with $N_G(E)/C_G(E)$. Now, arguments similar to \cite[Theorem 9.1.7]{Eve} assure us that $\Aut_{\mathcal{F}}(E)$ acts freely on $V^+_E$.
\end{rem}
In Section \ref{sec2} we will prove Theorem \ref{thmMislin} and \ref{thmbij}. In Section \ref{sec3} we give applications of Mislin's theorem to the case of block algebras of finite groups. It is known that if two blocks are basic Morita equivalent then there is an isomorphism between the defect groups which induces an equivalence between their local categories \cite[7.6.6]{Pu}.  We will prove that in some cases Mislin's theorem  and an inclusion between some subalgebras of stable elements in Hochschild cohomology algebras of group algebras over the defect groups (with respect to the source algebras of the blocks; see \cite{LiTr}) can give us an alternative method  to obtain an equality between the associated fusion systems (Brauer categories). Mislin's Theorem and cohomological techniques have implications for other results involving block algebras of finite groups. For example, under some assumptions (which are required to apply Mislin's theorem as above) we will show that two block algebras, which are splendid stable equivalent in the sense of Linckelmann \cite{LiSpl}, have the same fusion systems (Theorem \ref{thmsplendidstable}). One of the main features of this article appears in Section \ref{sec3} where we show how homological methods have applications in proving old/new results for block algebras of finite groups, although some of these results are already known and proved by specific methods.
\section{Saturated fusion systems}\label{sec1'}
The axioms of saturated fusion systems were invented by Puig in early 1990's. He called them "Frobenius categories" but his approach was taken up and extended by others and in the process an alternate terminology evolved which is now commonly used. In particular, we call today a saturated fusion system what Puig refers to a  Frobenius category. The next definitions which we recall are modified versions of \cite{Pu6}, but equivalent to them.

\begin{defn}\label{defnfus}(\cite{Pu6}, \cite{BLO}) A \emph{fusion system} over a $p$-group $S$ is a category $\mathcal{F}$, where $\Ob(\mathcal{F})$ is the set of all subgroups of $S$ and which satisfies the following two properties for all $P,Q\leq S$:
\begin{itemize}
  \item[$\bullet$]$\Hom_S(P,Q)\leq \Mor_{\mathcal{F}}(P,Q)\leq \Inj(P,Q)$ and
  \item[$\bullet$] each $\varphi\in\Mor_{\mathcal{F}}(P,Q)$ is the composite of an $\mathcal{F}$-isomorphism followed by an inclusion.
\end{itemize}
\end{defn}
Here $\Inj(P,Q)$ is the set of all injective group homomorphisms from $Q$ to $P$ and $\Hom_S(P,Q)$ is the set of group homomorphisms
$$\{\varphi\in\Hom(P,Q)| \varphi=c_s~\text{for some}~s\in S~\text{such that}~{}^gP\leq Q\}$$
where $c_s:P\rightarrow Q$ is defined by $c_s(x)={}^xs=xsx^{-1}$ for any $x\in P$. Composition in a fusion system $\mathcal{F}$ is always given by the composition of homomorphisms. We usually write $\Hom_{\mathcal{F}}(P,Q)=\Mor_{\mathcal{F}}(P,Q)$ to emphasize that morphisms in  $\mathcal{F}$ are actually group homomorphisms and we set $$\Aut_{\mathcal{F}}(P)=\Hom_{\mathcal{F}}(P,P).$$
A saturated fusion system is a fusion system satisfying certain axioms  which had origins in some properties of fusion in finite groups. The following version of these axioms is due to Roberts and Shpectorov and is taken from \cite{AsKeOl}.
\begin{defn} (\cite[Definition 2.2]{AsKeOl}) Let $\mathcal{F}$ be a fusion system over a $p$-group $S$.
\begin{itemize}
  \item[$\bullet$] Two subgroups $P,Q\leq S$ are $\mathcal{F}$-\emph{conjugate} if they are isomorphic as objects of $\mathcal{F}$.
      \item[$\bullet$] A subgroup $P\leq S$ is \emph{fully automized} in $\mathcal{F}$ if $\Aut_S(P)$ is a Sylow $p$-subgroup of $\Aut_{\mathcal{F}}(P)$.
  \item[$\bullet$] A subgroup $P\leq S$ is \emph{receptive} in $\mathcal{F}$ if it has the following property: for each $Q\leq S$ and each $\varphi\in\Mor_{\mathcal{F}}(Q,P)$ an isomorphism if we set
       $$N_{\varphi}=\{g\in N_S(Q)|{}^{\varphi}c_g\in\Aut_S(P)\},$$
       then there is $\overline{\varphi}\in\Hom_{\mathcal{F}}(N_{\varphi},S)$ such that $\overline{\varphi}|_Q=\varphi$.
       \item[$\bullet$] A fusion system $\mathcal{F}$ over a $p$-group $S$ is saturated if each subgroup of $S$ is $\mathcal{F}$-conjugate to a subgroup which is fully automized and receptive.
\end{itemize}
\end{defn}
The fusion category $\mathcal{F}_S(G)$ of a finite group $G$ with morphisms given by conjugation in $G$ and objects the subgroups of a Sylow $p$-subgroup $S$ in $G$ is the first example of saturated fusion systems. A different, important example which will be given in detail in Section \ref{sec3} is the saturated fusion system associated to a block algebra $b$ of a finite group $G$, with morphisms given by conjugation between $b$-Brauer pairs.

The theory of fusion systems is an emerging area of mathematics and we copy a paragraph from the Introduction of \cite{AsKeOl} to emphasize this aspect:
"Puig created his theory of Frobenius categories largely as a tool in modular representation theory, motivated in part by work of Alperin and Brou\'{e}. Later, homotopy theorists used this theory to provide a formal setting for, and prove results about, the $p$-completed classifying spaces of finite
groups. As part of this process, objects called \emph{p-local finite groups} associated to abstract fusion systems were introduced by Broto, Levi and Oliver in \cite{BLO}; these also possess interesting $p$-completed classifying spaces. Finally, local finite group theorists became interested in fusion systems, in part because methods from local group theory proved to be effective in the study of fusion systems, but also because certain results in finite group theory seem to be easier to prove in the category of saturated fusion systems."

\section{Proofs of theorems from Section \ref{sec1} }\label{sec2}

\emph{Proof of Theorem \ref{thmbij}.} Clearly $\Ker(i)$ is a nilpotent ideal and $\Hc^*(\mathcal{G})$ is finitely generated as $\Hc^*(\mathcal{F})$-module. This yields the surjectivity of $i^*$.

Suppose that $\mathcal{G}$ controls $p$-fusion in $\mathcal{F}$ on elementary abelian $p$-subgroups. Let $m_1,m_2\in V_{\mathcal{G}}$ such that $i^*(m_1)=i^*(m_2)$. By Quillen stratification for $V_{\mathcal{G}}$ (Theorem \ref{thmA1}), there are two elementary abelian $p$-subgroups $E_1,E_2\leq P$ unique up to $\mathcal{G}$-isomorphism and $\gamma_1\in V_{E_1}^+, \gamma_2\in V_{E_2}^+$ such that
$$m_1=r_{\mathcal{G},E_1}^*(\gamma_1), m_2=r_{\mathcal{G},E_2}^*(\gamma_2)$$
hence $$(i^*\circ r_{\mathcal{G},E_1}^*)(\gamma_1)=(i^*\circ r_{\mathcal{G},E_2}^*)(\gamma_2),$$
and since $r_{\mathcal{G},E_1}\circ i=r_{\mathcal{F},E_1}, r_{\mathcal{G},E_2}\circ i=r_{\mathcal{F},E_2}$ we obtain that $r_{\mathcal{F},E_1}^*(\gamma_1)=r_{\mathcal{F},E_2}^*(\gamma_2)$. Quillen stratification for $V_{\mathcal{F}}$ (Theorem \ref{thmA1}) gives us that $E_1$ is $\mathcal{F}$-isomorphic  to $E_2$ hence $E_1$ is $\mathcal{G}$-isomorphic to $E_2$. This allow us to choose $E_1=E_2=E$ such that $r_{\mathcal{F},E}^*(\gamma_1)=r_{\mathcal{F},E}^*(\gamma_2)$. The inseparable isogeny from Theorem \ref{thmA1}, (ii) is given by $r_{\mathcal{F},E}^*$ so $\gamma_1, \gamma_2$ are in the same orbit of the action of $W_{\mathcal{F}}(E)$ on $V_E^+$. The control of fusion on elementary abelian $p$-subgroups assure us that $\gamma_1,\gamma_2$ are in the same orbit of the action of $W_{\mathcal{G}}(E)$ on $V_E^+$. In conclusion
$$m_1=r_{\mathcal{G},E}^*(\gamma_1)=r_{\mathcal{G},E}^*(\gamma_2)=m_2.$$

Conversely suppose that $i^*$ is injective. First we prove that if $E_1, E_2$ are $\mathcal{F}$-isomorphic elementary abelian $p$-subgroups then they are also $\mathcal{G}$-isomorphic. So let $E_1, E_2$ be $\mathcal{F}$-isomorphic. Then $r_{\mathcal{F},E_1}^*(V_{E_1}^+)=r_{\mathcal{F},E_2}^*(V_{E_2}^+)$, hence $$i^*(r_{\mathcal{G},E_1}^*(V_{E_1}^+))=i^*(r_{\mathcal{G},E_2}^*(V_{E_2}^+)).$$ Since $i^*$ is injective  we obtain that $V_{\mathcal{G},E_1}^+=V_{\mathcal{G},E_2}^+$, hence $E_1,E_2$ are $\mathcal{G}$-isomorphic.

Secondly we prove that $\Aut_{\mathcal{F}}(E)=\Aut_{\mathcal{G}}(E)$ for any elementary abelian $p$-subgroup $E$ of $P$. Since $V_{\mathcal{F},E}^+=i^*(V_{\mathcal{G},E}^+)$ we obtain that $i^*$ induces a bijection between $V_{\mathcal{F},E}^+$ and $V_{\mathcal{G},E}^+$. This bijection, the inclusion $\Aut_{\mathcal{G}}(E)\subseteq\Aut_{\mathcal{F}}(E)$, Remark \ref{remfree} and the similar statements for $V_{\mathcal{G},E}^+$ give the desired equality;  the definitions of the bijection from $V_{\mathcal{F},E}^+$ to $V_E^+/\Aut_{\mathcal{F}}(E)$ (Theorem \ref{thmA1}) and of $i^*$ are important for showing this equality.

Finally, let $\varphi\in\Hom_{\mathcal{F}}(E_1,E_2)$, which gives us the decomposition
$$\xymatrix{E_1 \ar[rr]^{\varphi_1}&&\varphi(E_1)\ar@{^{(}->}[rr] &&E_2},$$ where  $\varphi_1:E_1\rightarrow \varphi(E_1)$ is an isomorphism in $\mathcal{F}$ hence, from the above, we have that $E_1$ is $\mathcal{G}$-isomorphic to $\varphi(E_1)$. It follows that there is $\alpha:E_1\rightarrow\varphi(E_1)$ an isomorphism in $\mathcal{G}$ such that $\alpha^{-1}\circ \varphi_1\in \Aut_{\mathcal{F}}(E_1)$, that is $\alpha^{-1}\circ \varphi_1\in \Aut_{\mathcal{G}}(E_1)$. It is easy to see now that $\varphi\in \Hom_{\mathcal{G}}(E_1,E_2)$.

\emph{Proof of Theorem \ref{thmMislin}.} From the hypothesis we obtain that $i^*$ is an $F$-isomorphism. In particular $i^*$ is bijective. By Theorem \ref{thmbij} we get that $\mathcal{G}$ controls $p$-fusion in $\mathcal{F}$ on elementary abelian $p$-subgroups. Hence \cite[Theorem B]{BenGroHen} assure us that $\mathcal{G}=\mathcal{F}$.

\section{Applications of Mislin's theorem to block algebras of finite groups}\label{sec3}
Let $H,G$ be two finite groups with a common $p$-subgroup $P$. Let $b$ be a block of $kG$  and let $c$ be a block of $kH$ with the same defect group  $P$. Let $P_{\gamma},P_{\delta}$ be defect pointed groups of $G_{\{b\}}$, respectively $H_{\{c\}}$ and $i\in \gamma, j\in \delta$ some source idempotents. Let $(P,e_P)$ be a maximal $(G,b)$-Brauer pair associated to $P_{\gamma}$ and $(P,f_P)$ be a maximal $(H,c)$-Brauer pair associated to $P_{\delta}$. For any subgroup $R$ of $P$, there is a unique block $e_R$ of $C_G(R)$ such that $\Br_R(i)e_R \neq 0$. Then $(R, e_R)$ is a $(G,b)$-Brauer pair and $e_R$ is also the unique block of $C_G(R)$ such that $(R, e_R) \leq (P, e_P)$. We define $\mathcal{F}_{(P,e_P)}(G,b)$ as the  category which has as objects the set of subgroups of $P$; for any two subgroups $R,S$ of $P$ the set of morphisms from $R$ to $S$ in  $\mathcal{F}_{(P,e_P)}(G,b)$ is the set of (necessarily injective) group homomorphisms $\varphi:R\rightarrow S$ for which there is an element $x \in G$ satisfying $\varphi(u)=xux^{-1}$ for all $u\in R$ and satisfying $^x(R, e_R)\leq (S, e_S)$. The category $\mathcal{F}_{(P,e_P)}(G,b)$ is sometimes called the Brauer category of $b$ with respect to the choice of $(P,e_P)$ and is a saturated fusion system. The analogous definitions give $\mathcal{F}_{(P,f_P)}(H,c)$. For the rest of this section we assume that $\mathcal{F}_{(P,f_P)}(H,c)$ is a subsystem of $\mathcal{F}_{(P,e_P)}(G,b)$ and $p$ is an odd prime.

We refer the reader to \cite{LiTr} for results regarding transfer maps and stable elements in Hochschild cohomology algebras. Recall that if $A,B$ are two symmetric $k$-algebras and $X$ is a bounded complex of $A-B$-bimodules whose components are projective as left and right modules, there is a graded $k$-linear map $t_X:\HH^*(B)\rightarrow\HH^*(A)$ called the transfer map associated to $X$ \cite[Definition 2.9]{LiTr}. If $\pi_X=t_X^0(1_B)\in Z(A)$ (the relatively $X$-projective element) is invertible then $T_X=\pi_X^{-1}t_X$ is called the normalized transfer map associated to $X$. If both $\pi_X,\pi_{X^*}$ are invertible then $T_X$ induces a graded $k$-algebra isomorphism which by abuse of notation we denote still by $T_X:\HH^*_{X^*}(B)\rightarrow\HH^*_X(A)$, with its inverse $T_{X^*}$. Here $\HH^*_X(A)$ is the graded subalgebra of $X$-stable elements in $\HH^*(A)$; more precisely  $\zeta\in \HH^*_{X}(A)$ if there is $\theta\in\HH^*(B)$ such that $\zeta\otimes\Id_{X}=\Id_{X}\otimes \theta$ in $\Ext_{A\otimes B^{op}}(X,X)$, see \cite[Definition 2]{Sas}. For example, we denote by $\HH^*_{ikGi}(kP)$ (respectively $\HH^*_{jkHj}(kP)$) the subalgebra of $ikGi$-stable ($jkHj$-stable) elements in the Hochschild cohomology algebra of $kP$. The $k$-algebra $ikGi$ (respectively $jkHj$) is called the source algebra of $b$ (respectively $c$) and we obtain decompositions as $kP-kP$-bimodules
$$ikGi\cong\bigoplus_{g\in Y_{G,b}}k[PgP],~~~jkHj\cong\bigoplus_{h\in Y_{H,c}}k[PhP]$$
where $Y_{G,b}\subseteq [P\setminus G/ P]$ and $Y_{H,c}\subseteq [P\setminus H/P]$, see \cite[Theorem 44.3]{TH}.

Next proposition may be regarded as a tool for checking when two blocks of two finite groups have the same local structure and  is a first application of Mislin's theorem (Theorem \ref{thmMislin}) to block algebras. Recall that $\delta_P:\Hc^*(P,k)\rightarrow\HH^*(kP)$ is the embedding defined in \cite[Proposition 4.5]{LiTr}.
\begin{prop}\label{propMorita-Brauer} With the above assumptions and notations if $$\HH^*_{jkHj}(kP)\cap \delta_P(\Hc^*(P,k))\subseteq \HH^*_{ikGi}(kP)$$ then $\mathcal{F}_{(P,f_P)}(H,c)=\mathcal{F}_{(P,e_P)}(G,b)$. In particular if $ikGi$ is isomorphic with a direct summand of $jkHj$ as $kP-kP$-bimodules then $\mathcal{F}_{(P,f_P)}(H,c)=\mathcal{F}_{(P,e_P)}(G,b)$.
\end{prop}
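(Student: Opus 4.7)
The plan is to apply Theorem \ref{thmMislin} to the inclusion $\mathcal{G}:=\mathcal{F}_{(P,f_P)}(H,c)\subseteq\mathcal{F}:=\mathcal{F}_{(P,e_P)}(G,b)$ of saturated fusion systems on $P$. Since $\mathcal{G}\subseteq\mathcal{F}$ already gives $\Hc^*(\mathcal{F})\subseteq\Hc^*(\mathcal{G})$, and since the ``in particular'' clause of Theorem \ref{thmMislin} converts $\Hc^*(\mathcal{F})=\Hc^*(\mathcal{G})$ into $\mathcal{F}=\mathcal{G}$ for $p$ odd, the whole proof reduces to establishing the reverse cohomological inclusion $\Hc^*(\mathcal{G})\subseteq\Hc^*(\mathcal{F})$.

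This is where the hypothesis enters, via the Hochschild-cohomological description of fusion-stable elements due to Linckelmann in \cite{LiTr}: the embedding $\delta_P:\Hc^*(P,k)\hookrightarrow\HH^*(kP)$ identifies $\Hc^*(\mathcal{F})$ with $\HH^*_{ikGi}(kP)\cap\delta_P(\Hc^*(P,k))$ and identifies $\Hc^*(\mathcal{G})$ with $\HH^*_{jkHj}(kP)\cap\delta_P(\Hc^*(P,k))$. The hypothesis then reads
\[
\delta_P(\Hc^*(\mathcal{G}))=\HH^*_{jkHj}(kP)\cap\delta_P(\Hc^*(P,k))\subseteq\HH^*_{ikGi}(kP)\cap\delta_P(\Hc^*(P,k))=\delta_P(\Hc^*(\mathcal{F})),
\]
and injectivity of $\delta_P$ yields $\Hc^*(\mathcal{G})\subseteq\Hc^*(\mathcal{F})$, completing the required equality and hence the proof by Theorem \ref{thmMislin}.

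For the final assertion, suppose $ikGi$ is isomorphic to a direct summand of $jkHj$ as a $kP$-$kP$-bimodule, say $jkHj\cong ikGi\oplus Y$. The space $\Ext^*_{kP\otimes kP^{op}}(jkHj,jkHj)$ decomposes accordingly into four blocks, so the defining equation $\zeta\otimes\Id_{jkHj}=\Id_{jkHj}\otimes\theta$ of $jkHj$-stability restricts on the $ikGi$-diagonal summand to $\zeta\otimes\Id_{ikGi}=\Id_{ikGi}\otimes\theta$. Consequently $\HH^*_{jkHj}(kP)\subseteq\HH^*_{ikGi}(kP)$, the hypothesis of the first assertion is automatically satisfied, and the conclusion follows at once.

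The crucial input, and the only real obstacle, is the exact form of Linckelmann's identification under $\delta_P$. If it is a genuine equality of subalgebras of $\delta_P(\Hc^*(P,k))$ on both sides, the argument runs verbatim as above. If instead the identification is only an $F$-isomorphism, then the same chain of containments only yields $\zeta^{p^r}\in\Hc^*(\mathcal{F})$ for every $\zeta\in\Hc^*(\mathcal{G})$ and some $r\geq 0$; but this is precisely the hypothesis that the more general form of Theorem \ref{thmMislin} is designed to handle, so the conclusion $\mathcal{G}=\mathcal{F}$ follows in either case.
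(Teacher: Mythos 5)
Your proof is correct and follows essentially the same route as the paper: reduce to the equality $\Hc^*(\mathcal{F}_{(P,f_P)}(H,c))=\Hc^*(\mathcal{F}_{(P,e_P)}(G,b))$ via the identification $\delta_P(\Hc^*(\mathcal{F}))=\HH^*_{ikGi}(kP)\cap\delta_P(\Hc^*(P,k))$ and then invoke Theorem \ref{thmMislin}; the paper resolves your one point of uncertainty by citing Sasaki's theorem, which gives this identification as a genuine equality (Linckelmann's earlier result only gives the inclusion $\subseteq$), and it handles the ``in particular'' clause by citing \cite[Proposition 3.5, (iv)]{LiTr} rather than by your direct (and correct) block-decomposition argument for the stability condition.
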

\begin{proof} The second statement follows from the first if we use \cite[Proposition 3.5, (iv)]{LiTr}. For the first statement, we know from \cite[Proposition 5.4]{LiTr} that $$\delta_P(\Hc^*(\mathcal{F}_{(P,e_P)}(G,b)))\subseteq \HH^*_{ikGi}(kP).$$ Moreover \cite[Theorem 1]{Sas} is a nice result which tells us more precisely that
$$\delta_P(\Hc^*(\mathcal{F}_{(P,e_P)}(G,b)))=\HH^*_{ikGi}(kP)\cap\delta_P(\Hc^*(P,k)),$$
$$\delta_P(\Hc^*(\mathcal{F}_{(P,f_P)}(H,c)))= \HH^*_{jkHj}(kP)\cap\delta_P(\Hc^*(P,k)).$$
It follows that $\delta_P(\Hc^*(\mathcal{F}_{(P,e_P)}(G,b)))=\delta_P(\mathcal{F}_{(P,f_P)}(H,c))$, hence $$\Hc^*(\mathcal{F}_{(P,e_P)}(G,b))=\Hc^*(\mathcal{F}_{(P,f_P)}(H,c)).$$ Now, Theorem \ref{thmMislin} gives us our desired conclusion.
\end{proof}
\begin{rem}  If $kGb$ and $kHc$ are two block algebras  for which there is an indecomposable direct summand $M$ of the $kGb-kHc$-bimodule $kGi\otimes_{kP}jkH$ which induces a Morita equivalence (i.e. there is a Morita equivalence between the blocks induced by a $p$-permutation bimodule); see \cite[Theorem 4.1]{LiSpl} or \cite[7.5.1]{Pu}, then we actually have an isomorphism of $kP-kP$-bimodules between the source algebras. It follows that the above proposition gives an alternative proof (when p is odd) of
the fact that a splendid Morita equivalence preserves block fusion systems.
\end{rem}

It is well known that if $kHc$ and $kGb$ are splendid stable equivalent then their cohomology algebras are isomorphic $\Hc^*(\mathcal{F}_{(P,e_P)}(G,b))\cong \Hc^*(\mathcal{F}_{(P,f_P)}(H,c)).~~~$ With our assumptions Mislin's theorem applied to block algebras gives a new method to prove that if two block algebras are splendid stable equivalent then the associated fusion systems are the same.
\begin{thm}\label{thmsplendidstable}Let $b,c$ be two blocks with the above assumptions. Let $X$ be a bounded complex of $kHc-kGb$-bimodules whose components are isomorphic to direct sums of direct summands of $kHj\otimes_{kQ}ikG$, where $Q$ runs over the set of subgroups of $P$. If $X$ induces a splendid stable equivalence (i.e. $X\otimes_{kGb}X^*\cong kHc\oplus U_c, X^*\otimes_{kHc}X\cong kGb\oplus U_b$, where $U_c$ is a bounded complex of projective $kHc-kHc$-bimodules and $U_b$ is a bounded complex of projective  $kGb-kGb$-bimodules) then $\mathcal{F}_{(P,f_P)}(H,c)=\mathcal{F}_{(P,e_P)}(G,b)$.
\end{thm}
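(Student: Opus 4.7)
The plan is to apply Theorem \ref{thmMislin} to the inclusion $i:\Hc^*(\mathcal{F}_{(P,e_P)}(G,b))\hookrightarrow \Hc^*(\mathcal{F}_{(P,f_P)}(H,c))$ that arises from the fusion subsystem assumption. By Mislin's theorem it is enough to show that every $\zeta\in\Hc^*(\mathcal{F}_{(P,f_P)}(H,c))$ has some Frobenius twist $\zeta^{p^r}$ in the image of $i$; in particular, literal equality of the two fusion-stable subalgebras of $\Hc^*(P,k)$ would suffice. Following the pattern of the proof of Proposition \ref{propMorita-Brauer}, I would translate this into Hochschild cohomology via the Linckelmann embedding $\delta_P$ and the identifications from \cite[Theorem 1]{Sas}:
$$\delta_P(\Hc^*(\mathcal{F}_{(P,e_P)}(G,b)))=\HH^*_{ikGi}(kP)\cap \delta_P(\Hc^*(P,k)),$$
$$\delta_P(\Hc^*(\mathcal{F}_{(P,f_P)}(H,c)))=\HH^*_{jkHj}(kP)\cap \delta_P(\Hc^*(P,k)),$$
so the question becomes one of comparing the $ikGi$-stable and $jkHj$-stable elements of $\HH^*(kP)$ that lie in $\delta_P(\Hc^*(P,k))$.

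The bimodule complex $X$ should then be used to construct explicit transfer maps matching these two stability conditions. The assumption that each component of $X$ is a direct summand of some $kHj\otimes_{kQ}ikG$ with $Q\le P$ is precisely what guarantees that, after restriction along the source algebra inclusions $kP\hookrightarrow ikGi\hookrightarrow kGb$ and $kP\hookrightarrow jkHj\hookrightarrow kHc$, one obtains a complex of $p$-permutation $kP$-$kP$-bimodules to which the transfer of \cite[Definition 2.9]{LiTr} applies well. The two splendid isomorphisms
$$X\otimes_{kGb}X^*\cong kHc\oplus U_c,\qquad X^*\otimes_{kHc}X\cong kGb\oplus U_b,$$
together with additivity of transfer and the vanishing of transfer along projective bimodules in positive degrees, yield mutually inverse isomorphisms $T_X$ and $T_{X^*}$ on positive-degree Hochschild cohomologies. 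Composing with the source algebra Morita equivalences and then restricting to $\HH^*(kP)$, I expect to obtain an equality (or at worst, after taking sufficiently high $p$-th powers, an $F$-isomorphism)
$$\HH^*_{ikGi}(kP)\cap\delta_P(\Hc^*(P,k))=\HH^*_{jkHj}(kP)\cap\delta_P(\Hc^*(P,k)).$$

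The main obstacle is this last transport step: $T_X$ is naturally defined between $\HH^*(kGb)$ and $\HH^*(kHc)$, whereas the stability conditions to be compared live inside $\HH^*(kP)$, and the passage between the two levels requires carefully following the source algebra inclusions, using the explicit description of $\delta_P$ from \cite[Proposition 4.5]{LiTr}, and tracking how the transfer behaves on the $p$-permutation summands $kHj\otimes_{kQ}ikG$. Once this comparison is carried out, one either has literal equality of the fusion-stable cohomology algebras or, more generally, the hypothesis of Theorem \ref{thmMislin} is met for $i$, and Mislin's theorem then gives the desired equality $\mathcal{F}_{(P,f_P)}(H,c)=\mathcal{F}_{(P,e_P)}(G,b)$.
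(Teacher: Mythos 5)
Your overall strategy (reduce everything to Theorem \ref{thmMislin} via $\delta_P$, Sasaki's identification of the fusion-stable subalgebra, and transfer maps attached to $X$) is in the right spirit, but the step you defer -- ``once this comparison is carried out'' -- is precisely the mathematical content of the theorem, and the route you sketch for it does not go through. For a splendid \emph{stable} equivalence there is no bimodule summand relation between the source algebras $ikGi$ and $jkHj$, so the mechanism of Proposition \ref{propMorita-Brauer} is unavailable; all the equivalence gives directly is an abstract isomorphism $\Hc^*(\mathcal{F}_{(P,e_P)}(G,b))\cong\Hc^*(\mathcal{F}_{(P,f_P)}(H,c))$, and an abstract isomorphism is useless for Theorem \ref{thmMislin}, whose hypothesis is about powers landing in the image of the specific inclusion $i$. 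Your proposed equality $\HH^*_{ikGi}(kP)\cap\delta_P(\Hc^*(P,k))=\HH^*_{jkHj}(kP)\cap\delta_P(\Hc^*(P,k))$ is essentially a restatement of what must be proved, and no argument is offered that transfers along the $kP$-$kP$-restriction of $X$ produce it; moreover $T_X$ and $T_{X^*}$ identify the stable subalgebras $\HH^*_{X^*}(kGb)$ and $\HH^*_X(kHc)$, which may strictly contain the images of the fusion cohomologies, so even a complete control of $T_X$ does not by itself yield surjectivity of $i$ at the algebra level.

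The missing idea, which is how the paper proceeds (following \cite[Theorem 5.5]{LiVar}), is to relate $T_X$ to the inclusion $i$ by a commutative square $T_X\circ\tau_b=\tau_c\circ i$, where $\tau_b=T_{kGi}\circ\delta_P$ and $\tau_c=T_{kHj}\circ\delta_P$ are the normalized source-algebra transfers (the invertibility of $\pi_X,\pi_{X^*}$ needed to normalize $T_X$ comes from \cite[Theorem 5.5 (i)]{LiVar}, and the commutativity reduces to the identity $\pi_{jXi}=\pi_{jkH}$ via \cite[Proposition 5.7]{LiTr}). One then passes to varieties: by \cite[Theorem 1.1]{LiVariso} the maps $\tau_b^*:X_{kGb}\to V_{\mathcal{F}_{(P,e_P)}(G,b)}$ and $\tau_c^*:X_{kHc}\to V_{\mathcal{F}_{(P,f_P)}(H,c)}$ are isomorphisms of varieties, so the identity $\tau_b^*\circ T_X^*=i^*\circ\tau_c^*$ together with bijectivity of $T_X^*$ forces $i^*$ to be injective; injectivity of $i^*$ gives control of fusion on elementary abelian subgroups by Theorem \ref{thmbij}, and then \cite[Theorem B]{BenGroHen} (i.e.\ the mechanism behind Theorem \ref{thmMislin}) yields $\mathcal{F}_{(P,f_P)}(H,c)=\mathcal{F}_{(P,e_P)}(G,b)$. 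Without this detour through the variety isomorphism of \cite{LiVariso}, your argument has no way to convert the equivalence $T_X$ into information about the inclusion map, which is the gap you would need to close.
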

\begin{proof} We mimic the proof of \cite[Theorem 5.5., (ii)]{LiVar} to obtain the following commutative diagram
\begin{displaymath}
 \xymatrix{\Hc^*(\mathcal{F}_{(P,e_P)}(G,b))\ar[rr]^{ T_{kGi}\circ\delta_P}\ar@{^{(}->}[d]^{i} && \HH^*_{X^*}(kGb) \ar[d]^{T_X} \\
                                                     \Hc^*(\mathcal{F}_{(P,f_P)}(H,c))\ar[rr]^{T_{kHj}\circ\delta_P}
                                                     &&\HH^*_{X}(kHc)
                                            }.
\end{displaymath}
where $T_X,T_{kGi}, T_{kHj}$  are the normalized transfer maps. Since the projective elements $\pi_X,\pi_{X^*}$ are invertible (\cite[Theorem 5.5, (i)]{LiVar}) the map $T_X$ is an isomorphism. For completeness we repeat some of the arguments of that proof. By \cite[3.2.3]{LiTr} we may choose symmetrizing forms such that $\pi_{kGi}=1_{kGb}, \pi_X=1_{kHc}$, or equivalently $T_{kGi}=t_{kGi}, T_X=t_X$. The relatively projective elements $\pi_{kHj}, \pi_{jkH}$ are still invertible, \cite[Proposition 5.4, (iv)]{LiTr}. In order to show the commutativity of the above diagram we need to show that $T_{jkH}\circ T_X\circ T_{kGi}\circ \delta_P=\delta_P\circ i$. This is equivalent to showing that $$\pi_{jkH}^{-1}\circ t_{jkH}\circ t_X\circ t_{kGi}\circ \delta_P=\delta_P\circ i,$$ hence equivalent to $$\pi_{jkH}^{-1}\circ t_{jXi}\circ \delta_P=\delta_P\circ i .$$ With our notations by \cite[Proposition 5.7, (iv)]{LiTr} we know that the map $t_{jXi}$ acts as multiplication by $\pi_{jXi}$ on $\delta_P(\Hc^*(\mathcal{F}_{(P,e_P)}(G,b)))$, so all we need to show is that $\pi_{jXi}=\pi_{jkH}$. But this is true since by \cite[3.2]{LiTr} we have $$\pi_{jXi}=t_{jXi}^0(1_{kP})=(t_{jkH}^0\circ t_{X}^0\circ t_{kGi}^0)(1_{kP})=t_{jkH}^0(t_{X}^0(1_{kGb}))=t_{jkH}^0(1_{kHc})=\pi_{jkH}.$$

We denote by $\tau_b$ the injective graded $k$-algebra homomorphism $T_{kGi}\circ \delta_P$. Similarly $\tau_c$ is $T_{kHj}\circ \delta_P$. By \cite[Theorem 1.1]{LiVariso} $\tau_b,\tau_c$ induce  isomorphisms of varieties
$$\tau_b^*:X_{kGb}\rightarrow V_{\mathcal{F}_{(P,e_P)}(G,b)}$$
$$\tau_c^*:X_{kHc}\rightarrow V_{\mathcal{F}_{(P,f_P)}(H,c)}$$
hence $\tau_b^*,\tau_c^*$ are bijective maps; where $X_{kGb},X_{kHc}$ are the  varieties of the Hochschild cohomology algebras $\HH^*(kGb), \HH^*(kHc)$. The first diagram yields $$T_X\circ \tau_b=\tau_c\circ i$$ hence $$\tau_b^*\circ T_X^*=i^*\circ\tau_c^*.$$ Since $T_X$ is an isomorphism it follows that $i^*$ is injective. Now Theorem \ref{thmMislin} give us the conclusion.
\end{proof}
In the last remark of this paper we give a new consequence of Mislin's theorem. We obtain a generalization of a theorem of Watanabe \cite[Theorem 2]{Wat} to the class of all finite groups, not just for $p$-solvable groups.
\begin{rem}
Let $R$ be a normal subgroup of $P$ such that $N_G(P)\leq N_G(R)$. We denote by $d$ the Brauer correspondent of $b$ in $N_G(R)$, that is $d$ is the unique block of $N_G(R)$ such that $\Br_P(d)=\Br_P(b)$, where $\Br_P$ is the Brauer homomorphism from $(kG)^P$ to $kC_G(P)$. Set $N=N_G(R)$. Then $b=d^G$ and $(P,e_P)$ is a maximal $(N,d)$-Brauer pair. Moreover $\mathcal{F}_{(P,e_P)}(N,d)$ is a subsystem of $\mathcal{F}_{(P,e_P)}(G,b)$. Now  from Theorem \ref{thmMislin}  we have  that $\Hc^*(\mathcal{F}_{(P,e_P)}(N,d))=\Hc^*(\mathcal{F}_{(P,e_P)}(G,b))$ if and only if $\mathcal{F}_{(P,e_P)}(N,d)=\mathcal{F}_{(P,e_P)}(G,b)$.
\end{rem}

\textbf{Acknowledgments.} We are grateful to Professor Markus Linckelmann for some email discussions and for the reference \cite{LiQuillfus}.

\end{document}